\def \cR {\mathbb{R}}
\def \cC {\mathbb{C}}
\def \cN {\mathbb{N}}
\def \cZ {\mathbb{Z}}
\def \Lau {\mathbb{L}}
\def \fp {\mathrm{fp}}
\def \FP {\mathrm{FP}}
\def \mon {\mathrm{e}}
\def \pdeg {\mathrm{pdeg}}
\def \revlex {\mathrm{rlex}}
\def \hom {\mathrm{hom}}
\def \DD {\mathrm{DD}}
\def \ddd {\mathrm{H}}
\def \supp {\mathrm{supp}}
\def \mon {\mathfrak{m}}
\def \reg {\mathrm{reg}}
\newtheorem{Theorem}{Theorem}[section]
\newtheorem*{Conjecture}{Conjecture}
\newtheorem{Proposition}[Theorem]{Proposition}
\newtheorem{Corollary}[Theorem]{Corollary}
\newtheorem{Rem}[Theorem]{Remark}
\newtheorem{Exam}[Theorem]{Example}
\newenvironment{Remark} {\begin{Rem} \rm }{\end{Rem}}
\newenvironment{Example} {\begin{Exam} \rm }{\end{Exam}}
\title{Solutions of the equation $a_n + (a_{n-1} + \cdots (a_2 +  (a_1 + x^{r_1})^{r_2}\cdots )^{r_{n}} = b\, x$}
\newcommand{\Address}{{
		\bigskip
		\footnotesize
		D.~Panazzolo, \textsc{IRIMAS, Université de Haute-Alsace, 68093 Mulhouse, France}\par\nopagebreak
		\textit{E-mail address}: \texttt{daniel.panazzolo@uha.fr}
		
		}}
\newcommand{\tmaffiliation}[1]{\\ \small#1}
\begin{document}
\author{
 Daniel Panazzolo\footnote{This material is based upon work supported by the Agence Nationale de la Recherche under Grant ANR-23-CE40-0028.}
  \tmaffiliation{Université de Haute-Alsace}
 }
\date{}
\maketitle
\begin{flushright}  Dedicated to Jorge Sotomayor\end{flushright} 

\begin{abstract}
We establish a novel upper bound for the real solutions of the equation specified in the title, employing a generalized derivation-division algorithm. As a consequence, we also derive a new set of Chebyshev functions adapted specifically for this problem.
\end{abstract}
\vspace{0.5cm}
\section*{Introduction}
The equation
\begin{equation}\label{equation-principal}
a_n + (a_{n-1} + \cdots (a_2 +  (a_1 + x^{r_1})^{r_2}\cdots )^{r_{n}} = b\, x
\end{equation}
frequently emerges in problems concerning the qualitative theory of vector fields. In these problems, the variables $(b,a,r)$, belonging to $\mathbb{R}_{>0}\times\mathbb{R}^{2n}$, are treated as free parameters. The goal is to estimate the number of real solutions $x$ to this equation, uniformly with respect to the parameters.

For instance, consider a smooth planar vector field $X$ whose phase portrait has a hyperbolic polycycle $\Gamma$ with $n$ saddle points $s_1,\ldots,s_n$ and let $X_\lambda$ be a smooth family of vector fields which unfolds $X$. We fix a local transverse section $\Sigma$ (see picture below) such that $\Sigma\cap \Gamma = \{p\}$ 
and consider the first return map $\pi:(\Sigma,p) \rightarrow (\Sigma,p)$, which depends on the parameter $\lambda$. The limit cycles bifurcating from $\Gamma$ are related to the isolated fixed points of $\pi$.

\begin{figure}[htb]
\begin{center}
\includegraphics[height=5cm]{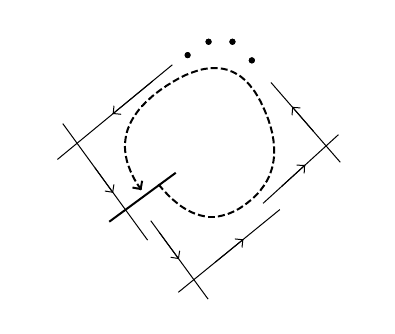}
\label{default}
\end{center}
\end{figure}

According a result of Mourtada (see \cite{Mo}, Theorem 1), in the vicinity of the parameter value $\lambda=0$, there exists an appropriate domain on the transversal where the fixed-point equation $\pi(x)=x$ has an expansion of the form (\ref{equation-principal}), up to some error term which is innocuous under generic conditions. Here the parameters $(a,r)$ depend smoothly on $\lambda$ and measure respectively the distance between the saddle separatrices and the ratio of eigenvalues at the saddle points. The additional parameter $b>0$ is related to the first multiplier of the Poincaré map along the unperturbed polycycle and is only relevant if the product $r_1\cdots r_n$ equals one.

In such particular setting, one is interested in finding an upper bound for the {\em cyclicity of the origin}. Namely, the number of solutions (\ref{equation-principal}) bifurcating from $x=0$ under small perturbation of the parameters.

Another situation where (\ref{equation-principal}) appears is related to slow-fast systems. More specifically, it is shown in \cite{DR} that such equation appears in the study of limit cycles under bifurcation from multi-layer canard cycles. In contrast to the previous situation, here one is interested in finding a {\em global} uniform bound for the number of isolated solutions.  

Independently, the structure of the group $G$ of real functions generated by translations $x \mapsto x+a$ and power maps $x \mapsto x^r$ is a subject of interest in group theory (see e.g.~\cite{Co}). Here, one is interested in the abstract structure of such group and one natural question is whether  $G$ is isomorphic to a free product. From a dynamical systems point of view, this problem is related to the following question, usually called {\em center problem}: Characterise those hyperbolic polycycles which lie on the boundary of a continuum of periodic orbits, i.e.~those polycycles for which the Poincaré first return map $\pi$ is the identity.  We refer the reader to \cite{Pa} for some results on this direction.

In this paper, we will be interested in estimating the maximum number of real solutions of (\ref{equation-principal}). To state the problem precisely, we need some definitions: Given $(a,r) \in \cR^{2n}$, let $I_n(a,r) \subset \cR$ denote the subset of all real numbers $x$ such that the functions $g_0,\ldots,g_{n}$ defined by 
\begin{equation}\label{seqg}
g_0(x) = x\quad \mbox{ and }\quad g_{i+1}(x) = a_{i+1} + g_{i}(x)^{r_{i+1}}, \mbox{ for } i = 1,\ldots,n-1.
\end{equation}
are all strictly positive.  Since all these functions are strictly monotone (increasing or decreasing), it is easy to see that $I_n(a,r)$ is an open (possibly empty) interval of $\cR$.  If the $r_i$ are all positive, $I_n(a,r)$ is a nonempty neighborhood of infinity.
\begin{Remark}
The set $\{(a,r,I_n(a,r)) : (a,r) \in \cR^n \times \cR^n\}$ is an open subset of $\cR^{2n+1}$ which belongs to the o-minimal structure of sets definable in the expansion of the real field by the Pfaffian functions (in the sense of Khovanskii).
\end{Remark}
So, our goal is to estimate the number of connected components of the  {\em solution set }
\begin{equation}\label{maineq}
S_n(b,a,r) = \{x \in I_n(a,r) \mid a_n + (a_{n-1} + \cdots (a_1 + x^{r_1})^{r_2}\cdots )^{r_{n}} - b x = 0\},
\end{equation}
Let $\FP_n(b,a,r)$ denote the number of such connected components. It follows immediately from the analyticity of the functions in (\ref{seqg}) that for each fixed parameter $(b,a,r)$, either $S_n(b,a,r) \equiv I_n(a,r)$ or $S_n(b,a,r)$ is a finite set of points.  As a consequence, $\FP_n(b,a,r) < \infty$, and we define 
\begin{equation}\label{definition-fp}
\fp(n)=\sup_{(b,a,r)} \FP_n(b,a,r)
\end{equation}
The first obvious question is whether $\fp(n) < \infty$. This is a direct Corollary of Khovanskii's Fewnomial theory \cite{K}.  Indeed, by introducing auxiliary variables 
$(x_1,\ldots,x_n) \in \cR^n_+$, 
equation (\ref{equation-principal}) can be equivalently written as a  system,
\begin{eqnarray*}
a_1 + x_1^{r_1} = x_2 \hfill\\
a_2 + x_2^{r_2} = x_3 \hfill\\
\vdots\\
a_{n} + x_n^{r_{n}} = b\, x_1 \hfill
\end{eqnarray*}
In such form, we have a system of $n$ equations in $n$-variables with $2n+1$ distinct monomials, and the theory of Khovanskii provides the bound
$$
\fp(n) \le \mathrm{K}(n) := 2^{n(2n-1)}(n+1)^{2n}
$$
(see \cite{K}). In  \cite{BS}, Sotille and Bihan slightly improved the fewnomial's bound. Their result implies that $\fp(n) \le \mathrm{SB}(n) := \frac{e^2+3}{4}\, 2^{\binom{n}{2}} n^n$.

One of the difficulties of the problem is that, in general, the sequence
(\ref{seqg}) is not a {\em Chebyshev system} (see e.g.~\cite{Cop}, Section 3.3, for the definition). To the author's knowledge, this was first observed in Mourtada's thesis \cite{JMM}. In that work, Mourtada constructed a specific example of a generic hyperbolic polycycle with four singularities which bifurcates into five limit cycles.  In our notation, this result translates to $\fp(4) \geq 5$. 
We shall see in Section~\ref{sect-isolatedroots} that $\fp(4) \le 13$ and, in Section~\ref{sec-casen5}, that in fact $\fp(3)=5$. The exact value of $\fp(n)$ is unknown for $n\ge 4$.

In the next Sections, we shall describe a simple algorithm which gives a new upper bound for $\fp(n)$ by a derivation-division algorithm. The algorithm is somewhat inspired in \cite{LRW}. 

Such new
upper bound coincides with 
the exact value of $\fp(n)$ for $n \in \{1,2,3\}$ and is significantly smaller the Khovanskii's and Bihan-Sottile's bound for $n \le 5$. However, it is 
largely outperformed by such bounds for $n \ge 6$. We refer to Section \ref{sect-numberofsteps} for a more detailed comparison.   

One of the advantages of our method is that it provides as a by-product a system of linearly independent monomials which allows to expand the left-hand side of 
(\ref{equation-principal}) uniformly with respect to the parameters $(a,r)$. This expansion can be interpreted as a generalization of the {\em compensator-type expansion} introduced independently by Roussarie \cite{Ro} and Ecalle \cite{Ec}. We refer to Section \ref{sect-compensator} for the details.

\section{Derivation/division for Laurent polynomials}
We initially consider a more abstract setting of a certain ring equipped with a derivation.

Given $n\in\cN$, let $R=\cZ[R_1,..,R_n]$ denote the commutative ring of polynomials $n$ variables $R_1,\ldots,R_n$ with integer coefficients. Our basic object will be the ring 
$$\Lau_n= R[x_1^{\pm},y_1^\pm,\ldots,x_n^{\pm},y_n^\pm],$$
i.e.~the ring of Laurent polynomials in $x_i,y_i$ with coefficients in $R$. We will frequently use the fraction notation and write, for instance, $x_1 y_1^{-1}$ simply as $x_1/y_1$. We convention that $\Lau_0 = R$.

A {\em monomial} in $\Lau_n$ is polynomial of the form $\mon=c\, x^k y^l$, with a nonzero coefficient $c\in R\setminus \{0\}$ and where we note
$$
x^k y^l := x_1^{k_1} \cdots x_n^{k_n} y_1^{l_1} \cdots y_n^{l_n}
$$
The vector $(k,l) \in \cZ^{n}\times \cZ^{n}$ will be called the {\em exponent} $\mon$ and we say  $\mon$ {\em unitary} if $c = 1$. The {\em (poly)-degree} of a such monomial is the integer vector $\pdeg(\mon) = k+l \in \cZ^{n}$. We note that all unitary monomials are invertible element of $\Lau_n$.

Each non-zero polynomial $p \in \Lau_n$ can be written as a finite sum of monomials
\begin{equation}
p = \sum_{i \in I} \mon_i
\end{equation}
where the set $\supp(p) = \{\mon_i : i \in I\}$ will be called the {\em support} of $p$.  
We now define successively three subrings of $\Lau_n$.
\begin{equation}\label{three-subrings}
\Lau_n^\reg\subset \Lau_{n-1}^\hom[x_n,y_n]\subset \Lau_n^\hom \subset \Lau_n
\end{equation}
Firstly, we denote by $\Lau_n^\hom \subset \Lau_n$ the subring of Laurent polynomials which are {\em homogeneous}, i.e.~such that all monomials in their support have a same degree. We will note by $\pdeg(p) \in \cZ^{n}$ the degree of an element $p$ of $\Lau_n^\hom$. For each $1\le j \le n$, we will denote by $\pdeg_j(p) \in \cZ$ the degree of $p$ with respect to the variables $(x_j,y_j)$ (i.e.~the $j^{th}$-component of $\pdeg(p)$).

We now define $\Lau_{n-1}^\hom[x_n,y_n] \subset \Lau_n^\hom$ as the subring of homogeneous Laurent polynomials which are {\em polynomials} in the $(x_n,y_n)$-variables.  In other words, we consider the Laurent polynomials $p$ such that each monomial $\mon=c x^k y^l \in \supp(p)$ has an exponent $(k,l)$ satisfying $k_n,l_n \ge 0$. Thus, if we let $m = \pdeg_n(p)$ and denote the variables $(x_n,y_n)$ simply as $(x,y)$, we can expand such element $p$ in the form
\begin{equation}\label{expansion-p}
p = \sum_{j=0}^{m} q_j x^j y^{m-j}
\end{equation}
with coefficients $q_j$ in $\Lau_{n-1}^\hom$.

Finally, we define the smallest subring $\Lau_n^\reg \subset \Lau_{n-1}^\hom[x_n,y_n]$ appearing in (\ref{three-subrings}), whose elements will be called {\em $n$-regular polynomials}. The definition is by induction on $n$:
\begin{itemize}
\item[(i)]For $n = 0$, the 0-regular polynomials are simply $L_0^\reg = R$.
\item[(ii)]For $n \ge 1$, we say that a Laurent polynomial $p \in \Lau_{n-1}^\hom[x_n,y_n]$ is {\em $n$-regular} if the coefficient $q_0$ in the expansion (\ref{expansion-p}) is $(n-1)$-regular.
\end{itemize}
Note that, in particular, if $p\in \Lau_n^\reg$ then $\pdeg(p) \in \cN^n$ is a positive vector and if $\pdeg_n(p) = 0$ then $p$ is $(n-1)$-regular. Inductively, we show that $\pdeg(p)=0$ if and only if $p$ belongs to the base ring $R$. 

Let us give a different characterisation of $\Lau_n^\reg$. Consider a homogeneous Laurent polynomial $p\in \Lau_n^\hom$ of degree $\pdeg(p)=(m_1,..,m_n) \in \cN^n$. Then $p$ is $n$-regular if and only if there exists a sequence of nonzero Laurent polynomials $p_j \in \Lau_{j-1}^\hom[x_j,y_j]$ for $0\le j\le n$ such that:
\begin{itemize}
\item[(1)] $p_n = p$ and
\item[(2)] for each $1\le j\le n$, $\pdeg_j(p_j) = m_j$ and we can inductively write
$$
p_j = Q_j + p_{j-1} y_j^{m_j}
$$
where $Q_j  \in \Lau_{j-1}^\hom[x_j,y_j]$ is divisible by $x_j$ (as a polynomial in $(x_j,y_j)$).  In other words, $Q_j$ contains only monomials of the form $\star x_j^\alpha y_j^\beta$ with positive exponents $\alpha,\beta$ such that $\alpha+\beta = m_j$ and $\beta < m_j$. 
\end{itemize}
This result leads to the following nested form for $p$,
\begin{equation}\label{nested-expansion-p}
p = Q_n + (\cdots (Q_2 + (Q_1 + \alpha y_1^{m_1})y_2^{m_2})\cdots)y_n^{m_n}
\end{equation}
for some $\alpha \in R$.
\begin{Example}\label{examples-reg}
For $n=2$, the Laurent polynomials
$$
p = \frac{x_1}{y_1} x_2 + y_2\qquad q=\left( \frac{x_1^2}{y_1}+\frac{x_1^3}{y_1^2}+x_1+\frac{y_1^2}{x_1} \right) x_2 y_2 + (x_1+y_1)y_2^2 
$$
are regular, of respective degrees $\pdeg(p) = (0,1)$ and $\pdeg(q) = (1,2)$
\end{Example}
We now turn $\Lau_n$ into a differential ring by introducing a derivation 
$$
\partial:\Lau_n \rightarrow \Lau_n
$$ 
which satisfies $\partial(R)=0$ and acts on the variables $x_j,y_j$ according to the following formulas
\begin{equation}\label{fp-der}
\partial(y_1) = \partial(x_1) = \Delta_1 x_1,\qquad \partial(y_j) = \partial(x_j) = \Delta_j x_j\, \frac{x_1 \cdots x_{j-1}}{y_1\cdots y_{j-1}},\qquad j\ge 2
\end{equation}
where we define $\Delta_j = R_1\cdots R_j \in R$. The following result is an obvious consequence of the above expressions for $\partial$ and the Leibniz rule. 
\begin{Proposition}\label{prop-derivation-keeps-pdeg}
{\rm (1)} The derivation $\partial$ maps the ring $\Lau_{j-1}^\hom[x_j,y_j]$ into itself, for each index $1\le j \le n$.  \\
{\rm (2)} Given $p \in \Lau_{j-1}^\hom[x_j,y_j]$, we either have $\partial p = 0$ or else
$$
\pdeg(\partial p) = \pdeg(p)
$$
in other words, $\partial$ preserves the degree.
\end{Proposition}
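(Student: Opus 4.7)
The plan reduces both claims to a single structural observation about the generators. From the defining formulas (\ref{fp-der}) one reads off that for every $j$,
$$
\partial(x_j) \;=\; \partial(y_j) \;=\; C_j\cdot x_j,
$$
where $C_1 = \Delta_1 \in R$ and $C_j = \Delta_j\, x_1\cdots x_{j-1}/(y_1\cdots y_{j-1})$ for $j\ge 2$. Each $C_j$ lies in $\Lau_{j-1}^\hom$, and a direct inspection shows that its multi-degree $\pdeg(C_j)\in\cZ^n$ is the zero vector: for $i<j$ the factor $x_i/y_i$ contributes $1+(-1)=0$, while $C_j$ is independent of the variables with index $\ge j$.

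I would then pick an arbitrary monomial $\mon = c\, x^k y^l$ and apply Leibniz, using $\partial(c)=0$ together with the identity above. After an elementary rearrangement the computation collapses to
$$
\partial(\mon) \;=\; \mon\cdot \sum_{i=1}^n C_i\!\left(k_i + l_i\,\frac{x_i}{y_i}\right).
$$
Every summand on the right-hand side is a Laurent polynomial of zero $\pdeg$. Consequently $\partial(\mon)$ is either zero or a homogeneous Laurent polynomial with the same multi-degree as $\mon$; summing this identity over the monomial support of any $p\in\Lau_{j-1}^\hom[x_j,y_j]$ (whose monomials all share the common degree $\pdeg(p)$) proves part (2).

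For part (1) I would specialize to a monomial $\mon$ with $k_j,l_j\ge 0$ and check that each term of the above sum, after multiplication by $\mon$, is still polynomial in $(x_j,y_j)$. Since $C_i$ involves only $x_k,y_k$ with $k<i$, the summands with $i\ne j$ do not touch the $(x_j,y_j)$-exponents. The only risk of producing a negative $y_j$-power is the $i=j$ term $l_j C_j (x_j/y_j)\mon$, but when $l_j=0$ this term vanishes, and otherwise $l_j-1\ge 0$. Hence $\partial(\mon) \in \Lau_{j-1}^\hom[x_j,y_j]$, and part (1) follows by linearity.

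The main obstacle, such as it is, lies entirely in careful bookkeeping; the whole argument hinges on the zero-degree factorization $\partial(x_j)=\partial(y_j)=C_j x_j$, and once this is in hand both assertions are immediate Leibniz computations.
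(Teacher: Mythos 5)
Your proof is correct and follows essentially the same route as the paper's: reduce to a single monomial by linearity, apply Leibniz to obtain $\partial\mon = \mon\sum_{i} \Delta_i P_{i-1}\bigl(k_i + l_i\,x_i/y_i\bigr)$ with each multiplier of poly-degree zero, and then inspect polynomiality in $(x_j,y_j)$ for part (1). One small caveat: for $i>j$ the factor $C_i$ \emph{does} contain $x_j/y_j$, so your stated reason that the summands with $i\ne j$ leave the $(x_j,y_j)$-exponents untouched does not apply there; those terms are instead harmless because a monomial of $\Lau_{j-1}^\hom[x_j,y_j]$ has $k_i=l_i=0$ for $i>j$, which makes them vanish outright.
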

\begin{proof}
By linearity, it suffices to consider the case of a monomial $\mon = x^k y^l$. Then, from the Leibniz rule, have
$$
\partial \, \mon  = x^k y^l \sum_{j=1}^{n} k_j \frac{\partial(x_j)}{x_j} + l_i \frac{\partial(y_j)}{y_j} = x^k y^l \sum_{j=1}^{n} \Delta_j P_{j-1}(k_j +  l_j \frac{x_j}{y_j})
$$
where $P_{j-1}= \frac{x_1\cdots x_{j-1}}{y_1 \cdots y_{j-1}}$ and $P_0 = 1$.  

We now remark that each term $P_{j-1}$ is a Laurent homogeneous polynomial of degree $0$. Therefore, by the additivity of the degree under multiplication, we easily obtain
$$
\pdeg( \partial\, \mon) = \pdeg(\mon).
$$
Finally, if we further suppose that $m$ belongs to $\Lau_{n-1}^\hom[x_n,y_n]$ (i.e.~
$k_n, l_n \ge 0$) then it follows from the above formula that $\partial \mon$ is a sum of monomials lying in $\Lau_{n-1}^\hom[x_n,y_n]$. The same argument works if $\mon$ lies in $\Lau_{j-1}^\hom[x_j,y_j]$.
\end{proof}
We observe however that the ring $\Lau_n^\reg$ {\em is not} preserved by $\partial$, as the following simple example shows:
\begin{Example} \label{example-derivationnonreg}
Consider the polynomial $p$ given in Example \ref{examples-reg}. Then, 
$$
\partial p = \left(\Delta_1 \frac{x_1}{y_1} + (2 \Delta_2-\Delta_1) \frac{x_1^2}{y_1^2} \right) x_2
$$
which is not a Laurent regular polynomial according to our definition.
\end{Example}
In order to deal with such phenomena, we observe that any element of $\Lau^\hom_n$ can be transformed into a regular Laurent polynomial upon division by an appropriate monomial.
 
More precisely, the {\em regularization monomial} associated to a non-zero Laurent polynomial $p \in \Lau^\hom_n$ is a unitary monomial $\mon = x^k y^l$ inductively defined as follows:
\begin{itemize}
\item[(i)] Case $n = 0$: We define $\mon=1$.
\item[(ii)] Case $n \ge 1$: If we note $m=\pdeg_n(p)$ then we observe that we can write an expansion of $p$ in the variables $(x,y) = (x_n,y_n)$ as 
$$
p = \sum_{j=n_0}^{n_1} q_j x^j y^{m-j}
$$
for some (possibly negative) integers $n_0 \le n_1$ and coefficients $q_j \in \Lau_{n-1}^\hom$ such that $q_{n_0},q_{n_1}$ are non-zero. We then define
$\mon = x^{n_0} y^{d-n_1} \bar \mon$, where $\bar{\mon}$ is the regularization monomial of $q_{n_0}$. 
\end{itemize}
We will denote such monomial $\reg(p) = \mon$.  The following result is obvious:
\begin{Proposition}
The Laurent polynomial $p/\reg(p)$ is regular. 
\end{Proposition}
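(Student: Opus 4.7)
The plan is to proceed by induction on $n$, matching the inductive definitions of both $\Lau_n^\reg$ and of the regularization monomial. The base case $n = 0$ is trivial: $\reg(p) = 1$ and $\Lau_0^\reg = R = \Lau_0^\hom$, so $p/\reg(p) = p$ is $0$-regular for every nonzero $p \in \Lau_0^\hom$.

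For the inductive step, fix a nonzero $p \in \Lau_n^\hom$ with $m = \pdeg_n(p)$ and expand it as in the definition $p = \sum_{j=n_0}^{n_1} q_j x^j y^{m-j}$, abbreviating $(x,y) = (x_n, y_n)$ and with $q_{n_0}, q_{n_1} \neq 0$. A preliminary observation, which I would record first, is that each nonzero coefficient $q_j$ lies in $\Lau_{n-1}^\hom$: indeed, since $p$ is homogeneous of total degree $\pdeg(p) = (m_1, \ldots, m_{n-1}, m)$ and each $x^j y^{m-j}$ contributes exactly $(0, \ldots, 0, m)$, every nonzero $q_j$ must itself be homogeneous of degree $(m_1, \ldots, m_{n-1})$ in $\Lau_{n-1}$.

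Applying the inductive hypothesis to the nonzero element $q_{n_0} \in \Lau_{n-1}^\hom$, the quotient $q_{n_0}/\bar{\mon}$ is $(n-1)$-regular, where $\bar{\mon} = \reg(q_{n_0})$ is a unitary monomial in the variables $x_1, \ldots, x_{n-1}, y_1, \ldots, y_{n-1}$ only. Reading the exponent ``$d$'' in the statement as $m = \pdeg_n(p)$, so that $\mon = x^{n_0} y^{m-n_1}\bar{\mon}$, I would then compute
$$
p/\mon = \sum_{j=n_0}^{n_1} (q_j/\bar{\mon}) \, x^{j-n_0} y^{n_1 - j}.
$$
Every summand has nonnegative exponents in $(x_n, y_n)$, so this lies in $\Lau_{n-1}^\hom[x_n, y_n]$, with $(x_n, y_n)$-degree equal to $n_1 - n_0$. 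The coefficient of $y_n^{n_1-n_0}$ (the $j = n_0$ term) is exactly $q_{n_0}/\bar{\mon}$, which is $(n-1)$-regular by the inductive hypothesis. The inductive characterization of $n$-regularity then yields $p/\mon \in \Lau_n^\reg$, completing the step.

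I expect no genuine obstacle: the content of the argument is essentially unwinding definitions and tracking exponents, and the choice $\reg(p) = x^{n_0} y^{m-n_1}\reg(q_{n_0})$ is engineered precisely so that division strips off all negative $(x_n,y_n)$-powers while relocating the original leading-in-$y_n$ coefficient $q_{n_0}$ into the $y_n^{\max}$-slot of $p/\mon$. The only delicate point is verifying that the $q_j$ automatically inherit homogeneity of matching degree, so that the single monomial $\bar{\mon}$ can be factored out uniformly across the expansion without leaving $\Lau_{n-1}^\hom$; this is handled by the preliminary observation above.
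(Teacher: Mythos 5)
Your proof is correct. The paper gives no argument here (it simply declares the result obvious), and your induction on $n$ --- expanding $p = \sum_{j=n_0}^{n_1} q_j x_n^{j} y_n^{m-j}$, checking that division by $x_n^{n_0}y_n^{m-n_1}\reg(q_{n_0})$ leaves only nonnegative $(x_n,y_n)$-exponents, and observing that the $y_n^{n_1-n_0}$-coefficient of the quotient is exactly $q_{n_0}/\reg(q_{n_0})$, which is $(n-1)$-regular by induction --- is precisely the intended verification, including your correct reading of the undefined exponent $d$ in the paper's definition as $m=\pdeg_n(p)$.
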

\begin{Example}
Consider the polynomial $q=\partial p$ given in Example \ref{example-derivationnonreg}. Then, the above definitions gives $\reg(q) = \frac{x_2}{x_1}{y_1^2}$. The Laurent polynomial
$$
r \coloneq \frac{q}{\reg(q)} =  \Delta_1 y_1 + (2 \Delta_2-\Delta_1) x_1 + \Delta_1 y_1 
$$
is regular, and has degree $\pdeg(r) = (1,0)$. 
\end{Example}
\begin{Theorem}
Suppose that $p\in \Lau_n^\reg$. Then either $\partial p = 0$ or the regular polynomial  
$$
q = \frac{\partial p}{\reg(\partial p)}
$$
is such that $\pdeg(q) <_\revlex \pdeg(p)$.
\end{Theorem}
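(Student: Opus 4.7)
The approach is induction on $n$. The base case $n=0$ is immediate, since every element of $\Lau_0^\reg = R$ is annihilated by $\partial$, so $\partial p = 0$ automatically.

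For the inductive step, I would write $p$ in its nested form $p = Q_n + p_{n-1}\,y_n^{m_n}$, where $m_n = \pdeg_n(p)$, the polynomial $Q_n \in \Lau_{n-1}^\hom[x_n,y_n]$ is divisible by $x_n$, and $p_{n-1} \in \Lau_{n-1}^\reg$. Applying the Leibniz rule together with the formulas for $\partial x_n$ and $\partial y_n$ yields
$$\partial p = \partial Q_n + \partial(p_{n-1})\,y_n^{m_n} + m_n\,\Delta_n\,\frac{x_1\cdots x_{n-1}}{y_1\cdots y_{n-1}}\,p_{n-1}\,x_n\,y_n^{m_n-1}.$$
A monomial-by-monomial check on $Q_n$ shows that every term of $\partial Q_n$ contains $x_n$ to a positive power at most $m_n$. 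Hence, expanding $\partial p = \sum_{j=n_0}^{n_1} q_j\,x_n^j\,y_n^{m_n-j}$ with $q_{n_0},q_{n_1}$ nonzero, I obtain $0 \le n_0 \le n_1 \le m_n$, and $n_0 = 0$ holds precisely when $\partial p_{n-1} \ne 0$.

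The conclusion then splits into two cases. If $(n_0,n_1) \ne (0, m_n)$, then $\pdeg_n(q) = n_1 - n_0 < m_n = \pdeg_n(p)$, and $\pdeg(q) <_\revlex \pdeg(p)$ already holds via a strict drop in the final coordinate. Otherwise $n_0 = 0$ and $n_1 = m_n$, in which case the definition of the regularization monomial collapses to $\reg(\partial p) = \reg(q_0) = \reg(\partial p_{n-1})$. The coefficient of $y_n^{m_n}$ in $q = \partial p / \reg(\partial p)$ is then $p'_{n-1} := \partial p_{n-1}/\reg(\partial p_{n-1})$, and applying the inductive hypothesis to $p_{n-1}$ (valid since $q_0 = \partial p_{n-1} \ne 0$) gives that $p'_{n-1}$ is $(n-1)$-regular with $\pdeg(p'_{n-1}) <_\revlex (m_1,\ldots,m_{n-1})$. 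Homogeneity of $q$ forces $\pdeg_j(q) = \pdeg_j(p'_{n-1})$ for $j < n$, and combined with $\pdeg_n(q) = m_n$ this delivers $\pdeg(q) <_\revlex \pdeg(p)$.

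I expect the main obstacle to be verifying the $x_n$-exponent bound on $\partial Q_n$, which makes the dichotomy on $(n_0,n_1)$ exhaustive and in particular forces $n_1 \le m_n$. This relies on the special shape of the derivation: $\partial x_n$ and $\partial y_n$ coincide and are both proportional to $x_n$, so differentiating a monomial $x_n^a\,y_n^{m_n-a}$ either preserves the $x_n$-exponent (when hitting $x_n^a$) or increases it by one (when hitting $y_n^{m_n-a}$), never producing an exponent outside $[1,m_n]$. Once this is in hand, the rest of the argument is essentially bookkeeping aligning $\reg(\partial p)$ with $\reg(\partial p_{n-1})$ in the critical case, allowing the inductive hypothesis to be invoked.
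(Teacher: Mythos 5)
Your argument is correct, but it takes a genuinely different route from the paper's. The paper does not induct on $n$: it locates the first index $j$ with $m_j\geq 1$ (so that $m_1=\cdots=m_{j-1}=0$ and the innermost regular coefficient is a nonzero constant $\alpha\in R$), writes out the full nested expansion down to that level, differentiates the whole nested expression at once, and observes that the innermost block $Q_j+\alpha y_j^{m_j}$ differentiates to a polynomial divisible by $x_j$; dividing by $\reg(\partial p)$ then yields the sharper, coordinate-located bound $\pdeg(q)\le_{\revlex}(0,\ldots,0,m_j-1,m_{j+1},\ldots,m_n)$. You instead peel off only the outermost pair $(x_n,y_n)$ and recurse, splitting on whether the $x_n$-support of $\partial p$ attains both extremes $0$ and $m_n$. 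The computational engine is the same in both proofs (differentiating $x_n^a y_n^b$ keeps the $x_n$-exponent in $\{a,a+1\}$, so $\partial Q_n$ remains divisible by $x_n$ and the coefficient of $y_n^{m_n}$ in $\partial p$ is exactly $\partial p_{n-1}$), but your induction has the merit of tracking the recursive definition of the regularization monomial explicitly: the identification $\reg(\partial p)=\reg(\partial p_{n-1})$ in the critical case $(n_0,n_1)=(0,m_n)$ is precisely the step the paper compresses into ``it now suffices to observe''. What you give up is the explicit location of the dropping coordinate, which the paper's version provides but does not use elsewhere. Two minor points: the subcase $m_n=0$ (where $Q_n=0$ and $p=p_{n-1}$) is silently absorbed into your second case with $n_0=n_1=0$ and deserves a sentence; and in the first case it is worth stating that a strict decrease of the last coordinate alone already forces $\pdeg(q)<_{\revlex}\pdeg(p)$, which is immediate from the definition of the reverse lexicographic order.
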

\begin{Remark}
Here we denote by $<_{\revlex}$ the usual reverse lexicographical ordering in $\cN^n$, namely
$$
(m_1,..m_n) <_{\revlex} (m_1',..,m_n') \Longleftrightarrow \exists i : (m_j < m_j^\prime,i <j\le n) \land m_i < m_i^\prime
$$
\end{Remark}
\begin{proof}
If $\pdeg (p) = 0$ then necessarily $p \in R$ and hence $\partial p = 0$. So, let us assume that $\pdeg(p) = (m_1, \ldots, m_n)$ is such
that $m_1 = \cdots = m_{j - 1} = 0$ and $m_j \geqslant 1$, for some $1 \le j \le n$. We claim that the polynomial $q$ defined in the enunciate
satisfies
\begin{equation}\label{pdegqlex}
 \pdeg (q) \le_{\revlex} (0, \ldots, 0, m_j - 1, m_{j +1}, \ldots, m_n)
\end{equation}
Indeed, it follows from the definition of regularity and the assumption on $\pdeg(p)$ that the expansion (\ref{nested-expansion-p}) can be rewritten as
\begin{equation}\label{nested-expansion-p2}
p = Q_n + (\cdots (Q_{j + 1} + (Q_j + \alpha y_j^{m_j}) y^{m_{j + 1}}_{j +
   1}) \cdots) y^{m_n}_n
\end{equation}
where $\alpha \in R \setminus \{ 0 \}$ and, for each index $j \le i \le n$, the coefficient $Q_i$ is an element $\mathbb{L}_{i - 1}[x_i, y_i]^{\hom}$ containing only exponents in
$x_i^{m_i} y_i^0, \ldots, x_i y_i^{m_i - 1}$. We recall that the derivation
$\partial$ applied to each monomial $y_i^{m_i}$ gives
$$
\partial y_i^{m_i} = m_i \Delta_i x_i y_i^{m_i - 1} P_{i - 1} 
$$
where $P_{i - 1} = \frac{x_1 \cdots x_{i - 1}}{y_1 \ldots y_{i - 1}} \in
\mathbb{L}_{i - 1}^{\hom}$.  In particular, for the innermost term in the
expansion (\ref{nested-expansion-p2}), we can write
$$
 \partial (Q_j + \alpha y_j^{m_j}) = \partial Q_j + \alpha \partial
   y_j^{m_j} = \widetilde{P_j} 
$$
where $\tilde{P}_j$ is a Laurent polynomial in $\mathbb{L}_{j - 1} [ x_j,
y_j]^{\hom}$ containing only exponents in $x_j^{m_j} y_j^0, \ldots,
x_j y_j^{m_j - 1}$ (and hence is divisible by $x_j$).

More generally, applying the derivation $\partial$ to the nested expression of $p$ given
above, we obtain
$$
\partial p = \tilde{Q}_n + (\cdots (\tilde{Q}_{j + 1} + \tilde{P}_j y^{m_{j
   + 1}}_{j + 1}) \cdots) y^{m_n}_n
$$
where, for each index $j + 1 \leqslant i \leqslant n$, the coefficient 
$\tilde{Q}_i$ is a Laurent polynomial in $\mathbb{L}_{i - 1} [ x_i, y_i]^{\hom}$
which contains only exponents in $x_i^{m_i} y_i^0, \ldots, x_i y_i^{m_i - 1}$.

Considering separately the cases $P_j = 0$ and $\tilde{P}_j \neq 0$, it now suffices to observe
that the division of $\partial p$ by its regularization monomial $
\reg(\partial p)$ results into a regular polynomial $q$ such that (\ref{pdegqlex}) holds.
\end{proof}
Applying successively the above Theorem, we obtain the following:
\begin{Corollary}[Derivation-division Algorithm]\label{der-div-algorithm}
Let $p \in \Lau_n^\reg$. Then the sequence $(p^{(n)})_{n\ge 0}$ of Laurent polynomials in $\Lau_n^\reg$, defined inductively as
$$
p^{(0)} = p,\qquad p^{(n+1)} = \frac{\partial\, p^{(n)}}{\reg (\partial\, p^{(n)})},\quad n \ge 0
$$
eventually gives an element $p^{(n)} \in R$.
\end{Corollary}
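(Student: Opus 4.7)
The plan is straightforward: apply the preceding Theorem iteratively and conclude by well-foundedness of the reverse lexicographical order on $\cN^n$.

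Concretely, for each $k \ge 0$ at which $\partial p^{(k)} \neq 0$, the Theorem guarantees that $p^{(k+1)}$ is well-defined, lies in $\Lau_n^\reg$, and satisfies $\pdeg(p^{(k+1)}) <_\revlex \pdeg(p^{(k)})$. A simple induction then produces a strictly descending chain
\[
\pdeg(p^{(0)}) >_\revlex \pdeg(p^{(1)}) >_\revlex \cdots
\]
in $\cN^n$. The order $<_\revlex$ is well-founded on $\cN^n$: in any such descending chain the last coordinate is non-increasing in $\cN$ and so stabilizes after finitely many steps, reducing the problem to revlex on $\cN^{n-1}$, and the claim follows by induction on $n$. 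Hence the iteration halts at some smallest $N$ with $\partial p^{(N)} = 0$, at which point $\pdeg(p^{(N)})$ has reached the minimum $0 \in \cN^n$ of the chain.

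Finally, I would invoke the characterization noted right after the definition of $\Lau_n^\reg$: a regular polynomial has $\pdeg=0$ if and only if it lies in the base ring $R$. This immediately yields $p^{(N)} \in R$. The only mildly delicate point---the one to watch during the write-up---is confirming that the descent cannot stop with $\partial p^{(N)} = 0$ while $\pdeg(p^{(N)}) \neq 0$; for this one uses the nested expansion (\ref{nested-expansion-p2}) together with the derivation rules (\ref{fp-der}), examining the contribution of the innermost term $\alpha\, y_j^{m_j}$ via $\partial y_j^{m_j} = m_j \Delta_j x_j y_j^{m_j-1} P_{j-1}$ to rule out a premature halt. Because the substantive content is already packaged in the Theorem, there is no real obstacle beyond this bookkeeping.
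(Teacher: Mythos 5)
Your proof is correct and follows essentially the same route as the paper's, which simply observes that $\pdeg(p)\in\cN^n$, that $<_\revlex$ is well-founded there, and that a regular polynomial lies in $R$ exactly when its degree vanishes. If anything you are more careful than the paper: the ``delicate point'' you flag---ruling out a premature halt with $\partial p^{(N)}=0$ but $\pdeg(p^{(N)})\neq 0$, via the contribution of the innermost term $\alpha\,y_j^{m_j}$---is genuinely needed and is left implicit in the paper's one-line argument.
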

\begin{proof}
It suffices to observe that if $p\in \Lau_n^\reg$ then $\pdeg(p) \in \cN^n$ and moreover $p \in R$ if and only if $\pdeg(p)=0$.
\end{proof}
In the above setting, we define the {\em derivation-division complexity} of $p$ as the smallest integer $m = \DD(p)\ge 0$ such that $p^{(m)} \in R$. 
\begin{Remark}
The basic idea behind the derivation-division algorithm is quite simple: The expressions given in (\ref{fp-der}) show that $\partial(y_j)$ only depends on the variables $x_1,\ldots,x_j$ and $y_1,..,y_{j-1}$. Hence, we inductively eliminate all monomials in $y_1,..,y_n$ and then divide out by the maximal possible factors in $x_1,..,x_n$. 
\end{Remark}
\begin{Example}\label{example-n=1}
Let $n=1$. Writing the variable $(x_1,y_1)$ simply as $(x,y)$, a polynomial $p\in \Lau^\reg$ of degree $m$ has the form
$$
p = \sum_{j=0}^m \alpha_j x^j y^{m-j} 
$$
where the coefficients $\alpha_j$ lie on the ring $R$. 
Applying the derivation $\partial$ to each monomial $x^j y^{m-j}$ gives 
$$
\partial x^j y^{m-j} = \Delta_1 \left( j x^j y^{m-j} + (m-j) x^{j+1} y^{m-j-1} \right)
$$
Therefore, we can write $\partial p = \sum_{j=0}^{m} \beta_j x^j y^{m-j}$ where
$$
\beta_j = \Delta_1 \left( j \alpha_j + (m-j-1) \alpha_{j-1} \right), \quad j\ge 1
$$
and $\beta_0 = 0$. As a consequence, $\partial p$ is divisible by $x$ and the polynomial $p^{(1)} = \partial p/x$ has degree at most $m-1$. 
We conclude by induction that $\DD(p)\le m$.
\end{Example}
\begin{Example}
Let $n=2$. Denoting the variables $(x_1,y_1)$ and $(x_2,y_2)$ simply by $(x,y)$ and $(x',y')$, we consider the special case where $p\in \Lau^{\reg}$ is such that
$$
p \in R[x,x^\prime,y,y^\prime]
$$
In other words, we assume that our initial Laurent polynomial is indeed an homogeneous polynomial. Writing $\pdeg(p) = (m',m)$, we claim that in this case $\DD(p) \le m(m'+1) + m'$. 

If $m=0$, we fall in the situation of the Example \ref{example-n=1} and the estimate holds. So, let us suppose that $m\ge 1$ and write the expansion
$$
p = Q_1 + p_0 y^m
$$
where $Q_1 = \sum_{j=1}^{m}{ q_j x^j y^{m-j}}$ and $p_0$ is a an element of $\Lau^{\reg}_1$.  In fact, due to our hypothesis, $p_0,q_1,..,q_m$ are all homogeneous polynomials of degree $m'$ in variables $(x',y')$.  

After at most $m'$ steps of the derivation-division algorithm, we obtain a new Laurent polynomial of the form
$$
q = R_1 + \alpha y^m,
$$
of degree $\pdeg(q) = (0,m)$.  Here $\alpha \in R$ and we have $R_1 = \sum_{j=1}^{m}{ r_j x^j y^{m-j}}$, where each coefficient $q_j$ is a Laurent polynomial
with support contained in the set of monomials $x'^i y'^{-i}$, with $i\in\{0,..,m\}$. In the subsequent derivation-division step, we will get $r = \partial q/ m$, where $\mon$ is the monomial
$$
m = \frac{x'}{y'^{m'+1}}
$$
and such that, $r$ has degree $(m',m-1)$. One can verify that such new polynomial $r$ satisfies the same hypothesis of $p$ (i.e.~lies in $R[x,x',y,y']$). Therefore, we conclude by induction hypothesis that $\DD(r) \le (m-1)(m'+1) + m'$. Hence, since $\DD(p) \le m' + 1 + \DD(r) = m(m'+1) + m'$.
\end{Example}
\section{Studying the number of isolated roots}\label{sect-isolatedroots}
We are now ready to estimate the the real solutions of equation (\ref{equation-principal}). 

\noindent For each fixed parameter value $(a,r) \in \cR^{2n}$, consider the sequence of functions $g_0,f_1,g_1,..,f_n,g_n$ defined inductively as $g_0(x) = x$, 
$$
f_1(x) = x^{r_1},\qquad g_1(x) = a_1 + f_1(x)
$$ and, for each $1 \le  i \le n-1$,
$$
f_{i+1}(x) = g_i(x)^{r_{i+1}},\qquad g_{i+1}(x)=f_{i+1}(x) + a_{i+1}
$$
As we have seen in the introduction, the functions $f_i,g_i$ are strictly positive and analytic on some open (possibly unbounded) interval 
$I(a,r) \subset \cR$.  We therefore can consider the subring $G_n$ of $C^\omega(I(a,r))$ given by
\begin{equation}\label{ring-Gn}
G_n = r[f_1^\pm,g_1^\pm,\ldots,f_n^{\pm},g_n^{\pm}]
\end{equation}
i.e.~the ring formed by finite linear combinations of monomials $f^k g^l$ (with possible negative exponents $k,l$) and coefficients in the ring $r=\cZ[r_1,..,r_n]$. 

The following result relates this latter ring with the ring of Laurent polynomials defined in the previous Section. 
\begin{Proposition}\label{prop-morphismphi}
The derivation $x \frac{d}{d x}$ maps the ring $G_n$ into itself. Moreover, if we consider the morphism of rings $\Phi:G_n \rightarrow \Lau_n$ defined by $\Phi(r_i) = R_i$ and
$$
\Phi(f_i) = x_i,\qquad  \Phi(g_i) = y_i,\qquad 1\le i \le n,
$$
Then, we have the relation $\partial \circ \Phi = \Phi \circ x \frac{d}{d x}$. 
\end{Proposition}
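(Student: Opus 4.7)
The plan is to reduce both assertions to a direct computation of $x\frac{d}{dx}(f_i)$ and $x\frac{d}{dx}(g_i)$ on the generators of $G_n$, and then invoke the fact that both $\partial$ and $x\frac{d}{dx}$ are derivations (so they are determined by their action on generators) together with the fact that $\Phi$ is a ring morphism.

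First, I would compute $x\,g_i'$ and $x\,f_i'$ by induction on $i$. For $i=1$, $f_1=x^{r_1}$ gives directly $x\,f_1' = r_1\,f_1 = \Delta_1 f_1$, and since $g_1 = a_1 + f_1$, also $x\,g_1' = \Delta_1 f_1$. For the inductive step, differentiating $f_{i+1} = g_i^{r_{i+1}}$ by the chain rule yields
\begin{equation*}
f_{i+1}' \;=\; r_{i+1}\,\frac{f_{i+1}}{g_i}\,g_i' \;=\; r_{i+1}\,\frac{f_{i+1}}{g_i}\,f_i',
\end{equation*}
where in the last step I use $g_i = a_i + f_i$. Multiplying by $x$ and applying the inductive formula for $x\,f_i'$, one obtains the explicit expression
\begin{equation*}
x\,f_{i+1}' \;=\; x\,g_{i+1}' \;=\; \Delta_{i+1}\,f_{i+1}\,\frac{f_1\cdots f_i}{g_1\cdots g_i},
\end{equation*}
which visibly lies in $G_n$ since every $f_j$ and $g_j$ is invertible there. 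By linearity and the Leibniz rule, this proves that $x\frac{d}{dx}$ maps $G_n$ into itself, settling (1).

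For (2), I observe that both $\partial$ and $x\frac{d}{dx}$ are derivations, and $\Phi$ is a ring morphism, so the difference $\partial\circ\Phi - \Phi\circ x\frac{d}{dx}$ is an $r$-linear derivation from $G_n$ to $\Lau_n$ (where $G_n$ acts on $\Lau_n$ via $\Phi$). To show it vanishes, it suffices to check it on the generators $r_i, f_i^\pm, g_i^\pm$. On $r_i$, both sides vanish ($\partial R_i = 0$ by definition, and $r_i$ is a constant function). On $f_i$ and $g_i$, the computation above gives
\begin{equation*}
\Phi\!\left(x\,f_i'\right) \;=\; \Delta_i\,x_i\,\frac{x_1\cdots x_{i-1}}{y_1\cdots y_{i-1}} \;=\; \partial(x_i) \;=\; \partial(\Phi(f_i)),
\end{equation*}
and the same identity with $y_i$ replacing $x_i$ for $g_i$, matching exactly the defining formulas (\ref{fp-der}) for $\partial$. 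The inverse generators $f_i^{-1}, g_i^{-1}$ are then automatic from the derivation property.

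The only genuine delicate point is the well-definedness of the morphism $\Phi$: one must be sure that if an abstract Laurent polynomial relation holds among the functions $f_1,g_1,\ldots,f_n,g_n$ in $G_n$, then the corresponding relation is also imposed in $\Lau_n$. I would simply sidestep this by interpreting $G_n$ as the image of the abstract Laurent polynomial ring $r[X_1^\pm,Y_1^\pm,\ldots,X_n^\pm,Y_n^\pm]$ under the evaluation map $X_i \mapsto f_i$, $Y_i \mapsto g_i$, and defining $\Phi$ on this abstract ring (sending $X_i\mapsto x_i$, $Y_i\mapsto y_i$); the computation above then shows that the kernel of the evaluation is contained in the kernel of $\Phi$ (in fact the commutation identity extends through the evaluation), so $\Phi$ descends to the claimed morphism on $G_n$. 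Once this is granted, the verification is essentially the chain-rule computation displayed above.
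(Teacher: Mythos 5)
Your core argument is the same as the paper's: compute $x\frac{d}{dx}f_i = x\frac{d}{dx}g_i = (r_1\cdots r_i)\,f_i\,\frac{f_1\cdots f_{i-1}}{g_1\cdots g_{i-1}}$ by the chain rule and compare with the defining formulas (\ref{fp-der}) for $\partial$; your version is just more explicit about the induction and about reducing the commutation identity to generators via the derivation property. That part is correct and matches the paper, which carries out exactly this computation (with less ceremony).

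The one place where you go beyond the paper --- the well-definedness of $\Phi$ --- is a real subtlety, and you are right to flag it, but your resolution does not work as stated. The claim that ``the kernel of the evaluation is contained in the kernel of $\Phi$'' is false for degenerate parameter values: if $a_1=0$ then $g_1=f_1$ as functions on $I(a,r)$, so $Y_1-X_1$ lies in the kernel of the evaluation map while $\Phi(Y_1-X_1)=y_1-x_1\neq 0$ in $\Lau_n$. The commutation identity you prove gives no control over such algebraic relations among the $f_i,g_i$. The honest fix (which is also how the relation is actually used later, in Theorem \ref{Theorem-estimatefp}) is to orient the morphism the other way: work with the evaluation map $\mathrm{ev}:\Lau_n\to C^\omega(I(a,r))$ sending $x_i\mapsto f_i$, $y_i\mapsto g_i$, $R_i\mapsto r_i$, which is always well defined, and prove $\mathrm{ev}\circ\partial = x\frac{d}{dx}\circ\mathrm{ev}$ by the same computation on generators. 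The paper glosses over this entirely, so relative to the paper your proposal is not missing anything essential --- but the specific kernel-containment argument you offer should be replaced by this reorientation rather than asserted.
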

\begin{proof}
The proof is immediate. Indeed, we have
$$
x \frac{d}{d x}  f_1 = x \frac{d}{d x}  x^{r_1} = r_1 x^{r_1} = r_1 f_1
$$
Similarly,  
$$
x \frac{d}{d x}  f_2 = x \frac{d}{d x}  (a_2 +  (a_1 + x^{r_1})^{r_2} = r_1 r_2 (a_2 +  (a_1 + x^{r_1})^{r_2-1} x^{r_1} = (r_1 r_2) f_2 \left( \frac{f_1}{g_1}\right)
$$
And, in general,
$$
x \frac{d}{d x}  g_j = x \frac{d}{d x}  f_j =   (r_1\cdots r_j)\, f_j\, \frac{f_1 \cdots f_{j-1}}{f_1\cdots f_{j-1}}
$$
Therefore, comparing with the expressions given in (\ref{fp-der}), we immediately conclude that  $\partial \circ \Phi = \Phi \circ x \frac{d}{d x}$.
\end{proof}
Consider now the  analytic function $\varphi: I(a,r) \rightarrow \cR$ given by
\begin{equation}\label{function-varphi}
\varphi(x) =  a_n + (a_{n-1} + \cdots (a_1 + x^{r_1})^{r_2}\cdots )^{r_{n}} - b x
\end{equation}
which we can also write as $\varphi = g_n - b\, g_0$. 

An easy computation shows that $\psi = x^2 \left(\frac{d}{d x}\right)^2 \varphi$ belongs to $G_n$. More specifically, if we consider its
image $q = \Phi(\psi)$ under the morphism $\Phi$ given above and the monomial
$$
\mon = \frac{x_1 \cdots x_n}{(y_1 \cdots y_{n-1})^2}
$$
Then, $p = q/m$ is assumes the very simple form 
\begin{equation}\label{formofp}
p = \Delta_n \sum_{j=1}^{n} (\Delta_{j}-\Delta_{j-1})\; \left(\prod_{i_1=1}^{j-1} x_{i_1} \right)\left(\prod_{i_2=j}^{n-1} y_{i_2} \right) 
\end{equation}
where we define $\Delta_0=1$ and recall that $\Delta_i = R_1\cdots R_i$ for $1\le i\le n$.   

\vspace{0.2cm}
\noindent{\bf Notation: }We denote by $\DD(n) \coloneq \DD(p)$ the number of steps in the derivation-division algorithm for the above polynomial $p$.
\vspace{0.2cm}

\noindent Using the fact that $p \in \Lau_{n-1}^\reg$, we can easily obtain the following result:
\begin{Theorem}\label{Theorem-estimatefp}
The number of isolated roots of $\varphi$ on $I(a,r)$ (counted with multiplicities) is bounded by $\DD(n) + 2$. 
\end{Theorem}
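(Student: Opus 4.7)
The plan is to combine two applications of Rolle's theorem with the derivation-division algorithm of Corollary~\ref{der-div-algorithm}. Since $g_0(x)=x$ must be strictly positive on $I(a,r)$, we have $I(a,r)\subset(0,\infty)$, so $\varphi''(x)=\psi(x)/x^2$ shares its zeros on $I(a,r)$ with $\psi$. Writing $N(\cdot)$ for the number of zeros on $I(a,r)$ counted with multiplicity, two Rolle applications to the analytic function $\varphi$ give
$$
N(\varphi)\le N(\varphi')+1\le N(\varphi'')+2 = N(\psi)+2.
$$
It therefore suffices to show $N(\psi)\le \DD(n)$ whenever $\psi\not\equiv 0$.

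To do so, I would interpret each term in the derivation-division sequence as an analytic function on $I(a,r)$. Let $p^{(0)}=p,\,p^{(1)},\ldots,p^{(\DD(n))}$ be the sequence of Corollary~\ref{der-div-algorithm} starting from the Laurent polynomial $p$ of (\ref{formofp}), and set
$$
P^{(k)}(x)\coloneq p^{(k)}\big|_{x_i=f_i(x),\,y_i=g_i(x),\,R_i=r_i}.
$$
The identity $\Phi(\psi)=\mon\cdot p$ together with the positivity of all $f_i,g_i$ on $I(a,r)$ yields $\psi(x)=M_0(x)P^{(0)}(x)$, where
$$
M_0(x)=\frac{f_1(x)\cdots f_n(x)}{(g_1(x)\cdots g_{n-1}(x))^2}>0,
$$
so $N(\psi)=N(P^{(0)})$. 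At each step the factorization $\partial p^{(k)}=\reg(\partial p^{(k)})\cdot p^{(k+1)}$, combined with the intertwining $\partial\circ\Phi=\Phi\circ x\,d/dx$ of Proposition~\ref{prop-morphismphi} (extended to $\Lau_n$ by the ring morphism $x_i\mapsto f_i,\,y_i\mapsto g_i,\,R_i\mapsto r_i$), translates to
$$
x\,\frac{dP^{(k)}}{dx}(x)=M_{k+1}(x)\,P^{(k+1)}(x),
$$
in which $M_{k+1}$ is the evaluation of the unitary monomial $\reg(\partial p^{(k)})$ and is therefore strictly positive on $I(a,r)$. Since $x>0$ and $M_{k+1}>0$, the functions $dP^{(k)}/dx$ and $P^{(k+1)}$ share their zeros, with multiplicity, on $I(a,r)$.

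Applying Rolle once more, $N(P^{(k)})\le N(dP^{(k)}/dx)+1=N(P^{(k+1)})+1$, and iterating $\DD(n)$ times gives $N(P^{(0)})\le N(P^{(\DD(n))})+\DD(n)$. By construction $p^{(\DD(n))}\in R=\cZ[R_1,\ldots,R_n]$, so $P^{(\DD(n))}$ is a real constant; when it is nonzero, $N(P^{(\DD(n))})=0$ and the bound $N(\varphi)\le \DD(n)+2$ follows at once.

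The only subtle point is the degenerate case $P^{(\DD(n))}\equiv 0$, which a finite backward descent settles. Whenever $P^{(k+1)}\equiv 0$ the relation above forces $dP^{(k)}/dx\equiv 0$, so $P^{(k)}$ is constant on $I(a,r)$; if that constant is nonzero the iterative bound applies with $k$ in place of $\DD(n)$, still yielding $N(P^{(0)})\le \DD(n)$, while otherwise we continue descending. Should the descent reach $P^{(0)}\equiv 0$, then $\psi\equiv 0$ and $\varphi$ is affine in $x$, hence has at most one isolated zero---well within the bound $\DD(n)+2$. The main obstacle, modulo care in the degenerate case, is really bookkeeping: one must verify that dividing by monomials in the $x_i,y_i$ (respectively by $\reg(\partial p^{(k)})$) corresponds to dividing by strictly positive analytic functions, so that no zeros are lost or spuriously introduced during the iteration.
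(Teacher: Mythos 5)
Your proposal is correct and follows essentially the same route as the paper: two applications of Rolle to reduce to $\psi = x^2\varphi''$, followed by one Rolle step per stage of the derivation-division algorithm, using that $x$ and the evaluated regularization monomials are strictly positive on $I(a,r)$ so that division neither creates nor destroys zeros. Your explicit treatment of the degenerate cases (the final constant vanishing for special parameter values, and the backward descent to $\psi\equiv 0$) is a welcome elaboration of a point the paper dispatches with the single phrase ``let us assume that $\psi$ is not identically zero,'' but it is not a different argument.
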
 
\begin{proof}
By Rolle's Theorem, the number of roots of $\varphi$ on $I(a,r)$ is bounded by the number of roots of $\psi$ plus two. Let us assume that $\psi$ is not identically zero. Then, the derivation-division algorithm described in Corollary \ref{der-div-algorithm}, when to the polynomial applied $p = \frac{\Phi(\psi)}{m}$
given by (\ref{formofp}), defines a sequence of analytic functions on $I(a,r)$ which eventually (after at most $\DD(n)$ steps) ends up into a constant function. Therefore, applying again Rolle's theorem, we conclude that the number of roots of $\varphi$ on $I(a,r)$ is bounded by $\DD(n)+2$. 
\end{proof}
Since the number $\DD(n)$ does not depend on any specific choice of parameters $(a,r)$, we conclude that:
\begin{Corollary}
The number $\fp(n)$ {\em (see (\ref{definition-fp}))} is bounded by $\DD(n) + 2$. 
\end{Corollary}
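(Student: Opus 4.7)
The plan is to deduce this corollary directly from Theorem~\ref{Theorem-estimatefp}, essentially by observing that the bound it provides is already uniform in the parameters $(b,a,r)$. Concretely, I would fix an arbitrary $(b,a,r) \in \cR_{>0}\times \cR^{2n}$, consider the analytic function $\varphi$ on $I(a,r)$ from (\ref{function-varphi}), and split into two cases according to whether $\varphi$ vanishes identically.

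In the degenerate case $\varphi \equiv 0$ on $I(a,r)$, we have $S_n(b,a,r) = I(a,r)$, which is a single (possibly unbounded) open interval, so $\FP_n(b,a,r) = 1$ and the bound $1 \le \DD(n) + 2$ is trivial. In the generic case, analyticity forces the zero locus of $\varphi$ on $I(a,r)$ to consist of isolated points only, so each connected component of $S_n(b,a,r)$ is a single isolated root of $\varphi$ and $\FP_n(b,a,r)$ coincides with the number of such roots (ignoring multiplicities, which can only decrease the count compared to Theorem~\ref{Theorem-estimatefp}). The theorem then gives $\FP_n(b,a,r) \le \DD(n) + 2$.

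The final step is to notice that $\DD(n)$ is, by its very definition, the derivation-division complexity of the fixed Laurent polynomial $p$ in (\ref{formofp}), an element of $\Lau_{n-1}^\reg$ whose coefficients lie in the abstract ring $R = \cZ[R_1,\ldots,R_n]$; nothing in its construction refers to any particular numerical value of $(b,a,r)$. Consequently the bound $\FP_n(b,a,r) \le \DD(n) + 2$ holds uniformly, and taking the supremum over all parameters yields $\fp(n) \le \DD(n) + 2$. I do not anticipate any real obstacle here: the argument is essentially bookkeeping once Theorem~\ref{Theorem-estimatefp} is in hand, and the only subtlety is isolating the identically-zero case so that ``number of isolated roots'' is a meaningful count of connected components of $S_n(b,a,r)$.
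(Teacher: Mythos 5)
Your argument is correct and follows the paper's own route: the corollary is deduced directly from Theorem~\ref{Theorem-estimatefp} by observing that $\DD(n)$ is defined purely in terms of the abstract Laurent polynomial $p$ and so is uniform in $(b,a,r)$, after which one takes the supremum. Your explicit handling of the degenerate case $\varphi\equiv 0$ is a sensible (if minor) addition that the paper disposes of earlier when noting that $S_n(b,a,r)$ is either all of $I_n(a,r)$ or a finite set.
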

\section{At most three solutions for $n = 2$}
As a simple illustration of the algorithm, let us prove that the equation 
\begin{equation}\label{eq2}
a_2 +  (a_1 + x^{r_1})^{r_2} - b\, x = 0
\end{equation}
has a maximum of three isolated solutions. If we denote by $\varphi$ the right-hand side of the above equation, the function $\psi = x^2 \left(\frac{d}{d x}\right)^2 \varphi$ has the form
$$
\psi = r_1 r_2 \left( r_1 \left( {r_2}  - 1 \right) (a_1 + x^{r_1})^{r_2 - 2} x^{2
r_1 - 2} + (r_1 - 1) (a_1 + x^{r_1})^{r_2 - 1} x^{r_1 - 2} \right)
$$
Therefore, upon division by $x^{r_1} (a+x^{r_1})^{r_2-2}$ we obtain
$$
r_1 r_2 \left( r_1 (r_2-1)  x^{r_1} + (r_1-1) (a_1 + x^{r_1}) \right)
$$
which, under the morphism $\Phi$, gives the regular Laurent polynomial
$$p = A_1 x_1 + B_1 y_1$$
where we set $A_1 = \Delta_2 (\Delta_2 - \Delta_1)$ and $B_1 = \Delta_2(\Delta_1 - \Delta_0)$.  
Now, applying one step of the derivation-division, we obtain
$$
A_1 x_1 + B_1 y_1 \overset{\partial}{\Longrightarrow}  \Delta_1(A_1+B_1) x_1 \overset{\cdot/x_1}{\Longrightarrow} \Delta_1(A_1+B_1) \in R
$$
Therefore, $\DD(n) = 1$ and we conclude that $\fp(2) \le 3$. 

The following explicit example shows that indeed $\fp(2) = 3$.  Take $a_1 = 0.004259259259$, 
$a_2 = -0.1516666667$, $r_1 = 2$ and $r_2 = 1/3$.  Numerical computations with SageMath shows that the equation
$$
 -0.1516666667+(0.004259259259+x^2)^{1/3}-x = 0
$$
has the three isolated solutions $x_1 = 0.0123409...$, $x_2 = 0.1741525...$ and
$x_3 = 0.3585065...$.
\section{At most five solutions for $n = 3$}\label{sec-casen5}
Let us prove that the equation 
\begin{equation}\label{eq3}
a_3 + (a_2 +  (a_1 + x^{r_1})^{r_2})^{r_3} - b\, x = 0
\end{equation}
has a maximum of five solutions, for all parameter values $(b,(a,r)) \in \cR_{>0} \times\cR^6$. 

A similar computation to the previous example leads to the polynomial
$$
p = A_2 x_1 x_2 + B_2 x_1 y_2  +
C_2 y_1  y_2
$$
where $A_2 = \Delta_3 (\Delta_3 - \Delta_2)$, $B_2 = \Delta_3 (\Delta_2 - \Delta_1)$ and $C_2 = \Delta_3 (\Delta_1 - \Delta_0)$. 

The following lines show the derivation-division algorithm ends in at most three steps. We omit the dependence on the coefficients $\Delta_i$ to simplify the notation:
$$
 p^{(0)} = x_1 x_2 + x_1 y_2 + y_1 y_2 
$$
$$
\partial p^{(0)} = \left( x_1 + \frac{x_1^2}{y_1} \right) x_2 + x_1 y_2
   \Longrightarrow p^{(1)} = \frac{\partial p^{(0)}}{x_1} = \left( 1 + \frac{x_1 }{y_1}
   \right) x_2 + y_2 
$$
$$
\partial p^{(1)} = \left( \left( \frac{x_1}{y_1} \right)^2 +
   \frac{x_1}{y_1} \right) x_2 \Longrightarrow p^{(2)} = \frac{\partial p^{(1)}}
   {(x_1 / y_1^2) x_2} = x_1 + y_1
$$
$$
 \partial p^{(2)} = x_1 \Longrightarrow p^{(3)} = \frac{\partial p^{(2)}}{x_1} = 1
$$
Therefore, we conclude that $\fp(3)\le 5$. 

The following explicit example shows that indeed $\fp(3) = 5$. We take
$$
a_3 = -8.39 \times 10^{-6},\;\; a_2 = 0.0035836,\;\; a_1 = -0.012\;\; r_1 = \frac{53}{150},\;\; r_2 = \frac{11}{8}, \;\; r_3 = 2
$$
Numerical computations with 20 digits of precision in SageMath give the following five solutions
$$
\begin{array}{l}
x_1 = 0.1270599... \times 10^{-4}\\
x_2 = 0.1921586... \times 10^{-4}\\
x_3 = 0.4764392... \times 10^{-4},\\
x_4 = 0.7949546... \times 10^{-4}\\
x_5 = 0.2384109...
\end{array}
$$
\section{Disconjugacy and compensator-type expansions} \label{sect-compensator}
We now show that, as a byproduct of the derivation-division algorithm, the function appearing in (\ref{equation-principal}) can be expanded in an uniform basis of functions satisfying the Chebyshev property.

Firstly, we recall some basic definitions about disconjugate differential operators. For a thoughtful treatment, we refer the reader to the excellent Coppel's book \cite{Cop}. 
A $m^{th}$-order linear differential operator $L$ defined on an interval $I$ is called 
{\em disconjugate} if it can be written in the form
\begin{equation}\label{disconjugate-op}
L u = \frac{1}{\rho_{m}}\frac{d}{d x} \frac{1}{\rho_{m-1}} \frac{d}{d x} \cdots \frac{d}{d x} \frac{1}{\rho_0} u
\end{equation}
where, for each $0 \le k \le m$,  $\rho_k$ is a strictly positive analytic function on $I$. A well-known result of Polya (see e.g. \cite{Cop}, Chapter 3) states that $L$ is disconjugate if and only if no solution $u$ of $L u = 0$ has more than $m-1$ zeros in $I$, counted with multiplicities. 

Assuming that $L$ is written as in (\ref{disconjugate-op}), we can easily construct a basis of solutions $\{\omega_0,\ldots,\omega_{m-1}\}$ for $L$ as follows: fix an arbitrary base point $x_0 \in I$ and set 
$$
\omega_0(x) =  \rho_0(x),\qquad \omega_1(x) = \rho_0(x) \int_{x_0}^x \rho_1(x_1) d x_1
$$
and, in general, define $\omega_k$ by the $k^{th}$-iterated integral
\begin{equation}\label{basisomega_k}
\omega_k(x) = \rho_0(x) \int_{x_0}^x \rho_1(x_1) \int_{x_0}^{x_1} \rho_2(x_2)\cdots \int_{x_0}^{x_{k-1}} \rho_k(x_k)\, d x_k\,  \cdots\, d x_2\,  d x_1
\end{equation}
Notice that this collection of functions has the following properties:  
\begin{itemize}
\item[(i)] They form a {\em Chebyshev} system, i.e.\ any non-trivial $\cR$-linear combination $\sum \lambda_k \omega_k(x)$ 
has at most $m-1$ zeros on $I$.
\item[(ii)] For each $0\le k \le m-1$, the subset $\{\omega_0,\ldots,\omega_{k-1}\}$ is a 
basis of solutions of the {\em truncated operator}
$$
L^{[k]} u = \frac{1}{\rho_{k}}\frac{d}{d x} \frac{1}{\rho_{k-1}} \frac{d}{d x} \cdots \frac{d}{d x} \frac{1}{\rho_0} u
$$
(with the convention that $L^{[0]} = {1}/{\rho_0}$)
\item[(iii)] $\omega_k(x)$ has a zero of multiplicity precisely $k$ at $x_0$ and $L^{[k]} \omega_k = 1$.
\end{itemize}
The last two properties imply that any solution $u$ of $L u = 0$ can be expanded as
\begin{equation}\label{computation-lambda}
u(x) = \sum_{k=0}^{m-1} \lambda_k\, \omega_k(x)
\end{equation}
where the coefficients $\lambda_k \in \cR$ are determined inductively as follows
$$
\lambda_{m-1} = L^{[m-1]} u,\qquad \lambda_{k-1} = L^{[k-1]}\, \big(u - \sum_{j=k}^{m-1} \lambda_k u_k \big), \quad 1\le k\le m-1
$$ 
Let us now apply these results to our original problem. 

\vspace{0.5cm}
\noindent For each $n \in \cN$ and each parameter value $(b,a,r) \in \cR_{>0}\times\cR^{2n}$, we consider the function $\varphi_{a,r} : I(a,r) \rightarrow \cR$ given by (\ref{function-varphi}). Since the parameter $b$ will be innocuous in the following discussion, we suppose from now on that $b=1$ and will omit it to simplify the notation. 
\begin{Proposition}\label{proposition-conjugate-operator}
The function $\varphi_{a,r}$ is a solution of a disconjugate linear differential operator $L_{a,r}$ of order $\DD(n)+3$.  
\end{Proposition}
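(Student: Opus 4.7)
The plan is to construct $L_{a,r}$ as a concrete disconjugate chain of the form (\ref{disconjugate-op}) whose intermediate outputs realize first the passage $\varphi \mapsto \psi := x^2 \varphi''$, and then the successive iterates of the derivation-division algorithm applied to $p = \Phi(\psi)/\mon$ from (\ref{formofp}).

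Let $p = p^{(0)}, p^{(1)}, \ldots, p^{(\DD(n))}$ be the sequence in $\Lau_{n-1}^\reg$ produced by Corollary \ref{der-div-algorithm}, and for each $k$ denote by $\psi_k$ the analytic function on $I(a,r)$ obtained from $p^{(k)}$ by substituting $x_i \mapsto f_i$, $y_i \mapsto g_i$. Similarly, let $M := f_1 \cdots f_n/(g_1 \cdots g_{n-1})^2$ denote the evaluation of $\mon$, and let $\mu_k$ denote the evaluation of the unitary monomial $\reg(\partial p^{(k)})$. By Proposition \ref{prop-morphismphi} this substitution intertwines $\partial$ with $x\frac{d}{dx}$, so $\psi_0 = \psi/M$, $\psi_{k+1} = (x\psi_k')/\mu_k$ for $0 \le k < \DD(n)$, and $\psi_{\DD(n)}$ is a constant in $R$. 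All the functions $M$ and $\mu_k$ are strictly positive and analytic on $I(a,r)$, since $I(a,r) \subset (0,\infty)$ and each $f_i,g_i$ is strictly positive and analytic there.

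I then take $L_{a,r}$ of order $\DD(n)+3$ with weights
\begin{equation*}
\rho_0 = \rho_1 = 1,\qquad \rho_2 = M/x^2,\qquad \rho_{j+2} = \mu_{j-1}/x \quad (1 \le j \le \DD(n)),\qquad \rho_{\DD(n)+3} = 1,
\end{equation*}
all strictly positive and analytic on $I(a,r)$. Unwinding the chain $w_k = (1/\rho_k)\frac{d}{dx} w_{k-1}$ from $w_0 = \varphi$ yields $w_1 = \varphi'$, $w_2 = x^2 \varphi''/M = \psi_0$, $w_{j+2} = \psi_j$ for $1 \le j \le \DD(n)$, and finally $w_{\DD(n)+3} = \frac{d}{dx}\psi_{\DD(n)} = 0$ since $\psi_{\DD(n)}$ is a constant. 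Hence $L_{a,r}\varphi = 0$ and $L_{a,r}$ is disconjugate of the advertised order.

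The only genuine technicality is reconciling the algorithm's native derivation $x\frac{d}{dx}$ with the plain $\frac{d}{dx}$ required by the Coppel normal form; this is handled by absorbing the positive factor $x$ (resp.\ $x^2$) into the weights $\rho_{j+2}$ (resp.\ $\rho_2$), legitimate because $x > 0$ on $I(a,r)$. The degenerate case $\psi \equiv 0$ (whence $\varphi$ is affine) is handled identically, all iterates vanishing automatically.
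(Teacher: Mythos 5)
Your construction is correct and is essentially the paper's own argument: both build $L_{a,r}$ as a disconjugate chain whose rightmost block produces $\psi = x^2\varphi''$ and whose remaining weights are the (positive) evaluations under $\Phi^{-1}$ of the regularization monomials arising in the derivation--division algorithm of Corollary~\ref{der-div-algorithm}, the factor $x$ being absorbed into the weights exactly as in the paper's formula $\frac{d}{dx}\bigl(\tfrac{x}{h_{s-1}}\bigr)\cdots\bigl(\tfrac{x}{h_0}\bigr)\frac{d}{dx}\,x^2\bigl(\tfrac{d}{dx}\bigr)$. Your version is, if anything, slightly more explicit about the positivity of the weights on $I(a,r)$ and about the degenerate case $\psi\equiv 0$.
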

\begin{proof}
We recall from the proof of Theorem \ref{Theorem-estimatefp} and Corollary \ref{der-div-algorithm} that if we set $\psi = x^2 \left(\frac{d}{d x}\right)^2$ and consider the Laurent polynomial $q = \Phi(\psi)$ then there exists a sequence of monomials $(\mon_k)$ for $0 \le k \le r-1$,  such that
\begin{equation}\label{operatorinLaurent}
\partial \frac{1}{\mon_{s-1}} \partial \cdots \partial \frac{1}{\mon_0} q = 0
\end{equation}
where $s = \DD(n)$ and $\partial$ is the derivation on $\Lau_n$ defined in (\ref{fp-der}). Using the morphism $\Phi$ defined in Proposition \ref{prop-morphismphi}, this relation gives
$L_{a,r} \varphi_{a,r} = 0$, where $L_{a,r}$ is the disconjugate operator of order $m = s+3$ given by
$$
L_{a,r} = \frac{d}{d x} \left(\frac{x}{h_{r-1}}\right) \frac{d}{d x} \cdots  \left(\frac{x}{h_0}\right) \frac{d}{d x}x^2 \left(\frac{d}{d x} \right)
$$
In this expression, for each $0 \le j \le r-1$, we choose $h_j = \Phi^{-1}(\mon_j)$ to be a function in ring $G_n$ (given in (\ref{ring-Gn})) which is mapped to $\mon_j$ under $\Phi$. 
\end{proof}
In particular, we observe that this result allows to expand $\varphi_{a,r}$ in terms of a {\em globally} defined collection of analytic Chebyshev functions. 
To enunciate this result precisely, we consider the open subset in the parameter space given by
$$
U = \{(a,r) \in \cR^{2n} \mid I(a,r)\text{ is nonempty }\}
$$
Note that, as we have observed in the introduction, $U$ contains in particular the set of all parameters $(a,r)$ such that $r \in \cR^n_{>0}$. Now, we define an open set $V$ in the total space $\{(a,r,x)\in \cR^{2n}\times \cR\}$ by
$$
V = \{\big(a,r,I(a,r)\big) \mid (a,r) \in U\}
$$
\begin{Proposition}[Compensator type expansion for $\varphi_{a,r}$] 
There exists a collection of $m = \DD(n) + 3$ analytic functions $\{\omega_k\}_{0 \le k\le m}$ defined on $V$ such that
\begin{itemize}
\item[(i)] For each fixed parameter value $(a,r)$ in $U$, the functions $\{x \mapsto \omega_{k}(a,r,x)\}$ forms a Chebyshev system on $I(a,r)$.
\item[(ii)] We can write the expansion
$$
\varphi_{a,r}(x) = \sum_{k=0}^{m-1} \lambda_k(a,r) \, \omega_{k}(a,r,x)
$$
where the coefficients $\lambda_k(a,r)$ are analytic functions on $U$. 
\end{itemize}
\end{Proposition}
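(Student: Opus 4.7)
The plan is to apply the classical iterated-integral construction (\ref{basisomega_k}) to the disconjugate operator $L_{a,r}$ produced by Proposition \ref{proposition-conjugate-operator}, and to verify that every ingredient depends analytically on the parameter $(a,r) \in U$. The bulk of the analytic work has already been done upstream, since Proposition \ref{proposition-conjugate-operator} realizes $\varphi_{a,r}$ as a solution of a disconjugate operator $L_{a,r}$ of the required order $m = \DD(n)+3$, with factors $x$, $x^2$, $x/h_j$ that are products of elements of $G_n$ and powers of $x$, hence strictly positive analytic functions on $V$.

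First I would extract from the factorization of $L_{a,r}$ the explicit strictly positive analytic functions $\rho_0,\ldots,\rho_m$ on $V$ appearing in the normal form (\ref{disconjugate-op}). Next I would fix an analytic base point $x_0: U \to \cR$ with $x_0(a,r) \in I(a,r)$ for all $(a,r) \in U$. With $x_0$ chosen, define $\omega_k(a,r,x)$ by the nested iterated integrals (\ref{basisomega_k}); analyticity on $V$ then follows from standard results on parameter-dependent integration, since both the integrand and the integration limits are analytic on the relevant domain.

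For property (i), any nontrivial real linear combination $\sum \lambda_k \omega_k(a,r,\cdot)$ lies in $\ker L_{a,r}$, so by Polya's theorem recalled before the proposition it has at most $m-1$ zeros on $I(a,r)$. For property (ii), since $\varphi_{a,r}\in\ker L_{a,r}$ and $\{\omega_0,\ldots,\omega_{m-1}\}$ is a basis of this $m$-dimensional kernel, the expansion exists and is unique. Analyticity of $\lambda_k$ on $U$ follows from the inductive formulas appearing just after (\ref{computation-lambda}): one has $\lambda_{m-1}(a,r) = L^{[m-1]}_{a,r}\varphi_{a,r}$, which is constant in $x$ because $L_{a,r}\varphi_{a,r} = 0$, and the lower $\lambda_k$ are obtained analogously by iteration. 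Each step is a composition of differentiations in $x$, multiplications by analytic functions on $V$, and evaluation at the analytic section $x_0(a,r)$, hence preserves analyticity.

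The main obstacle is arranging the base point $x_0(a,r)$ to be globally analytic on $U$, since the interval $I(a,r)$ is only guaranteed to be open and possibly unbounded, with boundary depending in an o-minimal (Pfaffian) way on the parameters. When no canonical global section exists, one may work on an open cover of $U$ and glue: two choices of base point yield bases related by an upper-unitriangular matrix of analytic functions, so both the Chebyshev property and the analyticity of the coefficients $\lambda_k$ are preserved on overlaps. In the natural setting $r \in \cR^n_{>0}$, where $I(a,r)$ is a neighborhood of infinity, one may alternatively take $x_0 = +\infty$ and express the $\omega_k$ via convergent improper integrals, which is morally the original form of Roussarie's and Ecalle's compensator expansions.
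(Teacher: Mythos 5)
Your proof follows essentially the same route as the paper: factor the disconjugate operator $L_{a,r}$ from Proposition \ref{proposition-conjugate-operator} into positive analytic weights $\rho_j$, build the $\omega_k$ by the iterated integrals (\ref{basisomega_k}) from an analytically chosen base point, invoke Polya's characterization for the Chebyshev property, and recover the $\lambda_k$ analytically via the recursion (\ref{computation-lambda}). Your extra care about the existence of a global analytic section $x_0(a,r)$ (gluing over a cover, or taking $x_0=+\infty$ when $r\in\cR^n_{>0}$) is a sensible elaboration of a point the paper simply asserts, but it does not change the argument.
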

\begin{proof}
The $m^{th}$-order differential operator $L_{a,r}$ defined in Proposition \ref{disconjugate-op} obviously depends analytically on the parameters $(a,r) \in U$. In other words, the functions
$\rho_0,..,\rho_m$ which appear when we decompose $L_{a,r}$ as (\ref{disconjugate-op}), are analytic on $V$.  

Let $\{\omega_k\}_{0 \le k\le m}$ be the basis of solutions of $L_{a,r}$ defined according to (\ref{basisomega_k}). Note that the base
point $x_0$ used in computation of the iterated integrals must chosen as a point on the interval $I(a,r)$. Such base point, seen as a function of the parameters $(a,r)$, can also be chosen to be analytic.

Finally, when we write $\varphi_{a,r}$ in terms of such basis of solutions $\{\omega_k\}$, the coefficients $\lambda_k(a,r)$ can be computed according to the recursive procedure described in (\ref{computation-lambda}).  This shows that they are analytic functions of $(a,r)$. 
\end{proof}
We will say that ideal ${\cal I}_{a,r} \subset C^\omega(U)$ generated by the functions $\{\lambda_k\}_{0\le k \le m}$ is the {\em ideal of coefficients} of $\varphi_{a,r}$. 
\begin{Remark}
Although the functions $\lambda_k(a,r)$ obviously depend on the choice of the basis $\{\omega_k\}$, the ideal ${\cal{I}}_{a,r}$ itself is independent of this choice.  In fact one 
can prove that ${\cal{I}}_{a,r}$ is indeed a {\em polynomial ideal}, i.e.~generated by polynomials in $\cZ[a,r]$. Let us sketch the proof assuming that $r \in \cR_{>0}$: We consider the Laurent polynomial $q = \Phi(\psi) \in \Lau_n$ defined in the proof of Proposition \ref{proposition-conjugate-operator}. By definition, we can write a finite  expansion
$$
q = \sum_j{c_j\, \mon_j}
$$
where $\mon_j$ are Laurent monomials and $c_j$ are coefficients in $R=\cZ[R_1,..,R_n]$. Under the morphism $\Phi$, each monomial $\mon_j$ corresponds to an analytic function $h_j = \Phi^{-1}(m_j)$ defined on some open interval containing $+\infty$ on the boundary. Therefore, we can consider the asymptotic expansion of $h_j$ at $x=+\infty$. In fact, a simple application of the binomial expansion to each monomial shows that the asymptotic expansion of $q$ has the form
$$
\sum_{\alpha \in S} P_\alpha(r,a) x^\alpha
$$
where $S$ is the semigroup $\cN r_n + \cN r_{n-1}r_n + \cdots + \cN r_1\cdots r_n$ and each $P_j(r,a)$ is a polynomial in $\cZ[a,r]$. Now, it is not very hard to see that ${\cal{I}}_{a,r}$ is generated by $\langle P_\alpha(r,a) \rangle_{\alpha \in S}$. 
\end{Remark}
\begin{Example}[Classical compensators]
Let $n=1$ and consider the function
$$
\varphi = a + x^{r} - x
$$
with $r >0$ and $a \in \cR$.  This function lies in the kernel of the third order disconjugate linear differential operator 
$$
L = \frac{d}{d x} \frac{1}{x^r} \left(\frac{d}{d x}\right)^2
$$
Hence, $\rho_0 = \rho_1 = 1$ and $\rho_2 = x^{r-2}$. Let us compute the basis of solutions $\{\omega_0,\omega_1,\omega_2\}$ by 
integrating from the base point $x_0 = 1$.  We have
$\omega_0(x) = 1$, $\omega_1(x)=x-1$ and 
$$
\omega_2(x) = \int_1^{x} \Omega(r,y) d y, \quad \text{ where }\quad
\Omega(r,y) = 
     \begin{cases}
     \log y, & \mbox{ if }r = 1, \vspace{0.2cm} \\ 
     \displaystyle{\frac{y^{r-1}-1}{r-1}}, & \mbox{ otherwise}
     \end{cases}
$$
is the well-known Ecalle-Roussarie compensator. Explicitly, we obtain 
$$
\omega_2(x) = \begin{cases}
     x \log\left(x\right) - x + 1, & \mbox{ if }r = 1, \vspace{0.2cm} \\ 
     \displaystyle{-\frac{r x - r - x^{r} + 1}{{\left(r - 1\right)} r}}, & \mbox{ otherwise}
     \end{cases}
$$
The ideal of coefficients of ${\cal{I}}_{a,r}$ is generated by the polynomials $\lambda_0=a$, $\lambda_1 = r-1$ and $\lambda_2 = r(r-1)$.
\end{Example}
Motivated by Roussarie's result for the saddle loop case \cite{Ro}, we expect that the generalized compensators introduced in the last Proposition will ultimately prove useful in analyzing the first return map of non-generic hyperbolic polycycles.
\section{The number of steps in the D-D algorithm}  \label{sect-numberofsteps}
We now give an upper estimate on the number $\DD(p)$ of steps of the derivation-division algorithm, when applied on a given Laurent polynomial $p\in \Lau^\reg_n$. Such bound depends solely on the support $\supp(p)$, seen as a subset of $\cZ^{2n}$. 

More precisely, denoting by $\pdeg(p) = (m_1,\ldots,m_n) \in \cN^n$ the degree of $p$, let us suppose that 
\begin{equation}\label{supp-p}
\supp(p) \subset [0,m_1] \times \cdots \times [0,m_n]
\end{equation}
i.e.~that $p$ is a homogeneous polynomial in the variables $x_i,y_i$ (with no negative exponents). For each $1 \le k \le n$, we denote by 
$$\ddd_k : \cN^k \rightarrow \cN$$
the collection of functions defined recursively as follows: 
\begin{itemize}
\item[(1)] If $k = 1$ then $\ddd_1(m) = m$
\item[(2)] If $k \ge 2$ then
\begin{itemize}
  \item[(2.a)] If $m_k = 0$ then $\ddd_k(m_1,\ldots,m_{k-1},0) = \ddd_{k-1}(m_1,\ldots,m_{k-1})$
  \item[(2.b)] If $m_k \ge 1$ then $\ddd_k(m_1,\ldots,m_{k-1},m_k) = \ddd_k(m_1,\ldots,M_{k-1},m_k - 1)$, where $M_{k-1} = m_{k-1}+\ddd(m_1,\ldots,m_{k-1})$.
\end{itemize}  
\end{itemize}
We can now state the following result, whose proof we will omit:
\begin{Proposition}
If $\supp(p)$ satisfies the condition (\ref{supp-p}) then 
$$\DD(p) \le \ddd_n(m_1,\cdots,m_n).$$
\end{Proposition}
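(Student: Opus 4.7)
I would proceed by induction on $n$, with an inner induction on the reverse-lexicographic order of $\pdeg(p)$, following the recursive structure of $\ddd_n$. The outer base case $n=1$ is settled by Example~\ref{example-n=1}, which shows $\DD(p)\le m=\ddd_1(m)$ for any regular $p$ of degree $m$ whose support lies in $[0,m]$.

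For the outer step $n\ge 2$, I would split into the two cases of the definition. If $m_n=0$, then the hypothesis on $\supp(p)$ forces $p$ to involve neither $x_n$ nor $y_n$, so $p\in\Lau_{n-1}^\reg$; moreover, the derivation $\partial$ of~\eqref{fp-der} restricted to $\Lau_{n-1}^\reg$ acts exactly as the analogous derivation on $\Lau_{n-1}$. By the outer induction this gives $\DD(p)\le\ddd_{n-1}(m_1,\ldots,m_{n-1})$, matching clause~(2.a) of the definition. If $m_n\ge 1$, I would identify a ``macro-step'' of the DD algorithm that reduces $\pdeg_n$ from $m_n$ to $m_n-1$. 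Using the nested expansion~\eqref{nested-expansion-p} and the revlex-descent inequality~\eqref{pdegqlex}, one sees that after finitely many individual DD steps the $n$-th coordinate of $\pdeg$ strictly drops; the resulting polynomial $p'$ then satisfies the hypothesis of the Proposition with parameters $(m_1,\ldots,M_{n-1},m_n-1)$, and the inner induction closes the argument via the recursive identity $\ddd_n(m_1,\ldots,m_n)=\ddd_n(m_1,\ldots,M_{n-1},m_n-1)$ of clause~(2.b).

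The main obstacle is the quantitative control hidden in the parameter $M_{n-1}=m_{n-1}+\ddd_{n-1}(m_1,\ldots,m_{n-1})$. The proof of the Theorem preceding Corollary~\ref{der-div-algorithm} tracks $\pdeg$ cleanly via revlex but does not address the exact support box. Each intermediate DD step can shift exponents in the first $n-1$ variable pairs, because the formulas~\eqref{fp-der} introduce factors $\frac{x_1\cdots x_{j-1}}{y_1\cdots y_{j-1}}$ when differentiating $y_j$, and the subsequent division by $\reg(\partial p)$ redistributes these exponents. The combinatorial heart of the argument is an auxiliary lemma showing that during the at most $\ddd_{n-1}(m_1,\ldots,m_{n-1})$ intermediate steps required to reach $p'$, the $(n-1)$-th coordinate of the support box grows by at most $\ddd_{n-1}(m_1,\ldots,m_{n-1})$, yielding precisely the bound $M_{n-1}$. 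I would prove this lemma by the same double induction, using the explicit monomial calculations from the proof of~\eqref{pdegqlex}.
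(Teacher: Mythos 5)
The paper explicitly omits its proof of this Proposition, so there is no reference argument to compare yours against; I can only assess the proposal on its own terms. Your overall scheme (outer induction on $n$, base case from Example~\ref{example-n=1}, a case split mirroring clauses (2.a) and (2.b), and a ``macro-step'' lowering $\pdeg_n$ by one) is the natural one, and you correctly isolate the hard point, namely the quantitative control of the support during the macro-step. But the way you close the inner induction does not work. Clause (2.b) is a pure substitution, $\ddd_n(m_1,\ldots,m_n)=\ddd_n(m_1,\ldots,M_{n-1},m_n-1)$, with no additive term. Your macro-step consumes some number $s\ge 1$ of actual derivation-division steps before $\pdeg_n$ drops, so the most your inner inductive hypothesis applied to $p'$ can give is $\DD(p)\le s+\ddd_n(m_1,\ldots,M_{n-1},m_n-1)=s+\ddd_n(m_1,\ldots,m_n)$, which is strictly weaker than the claim; nothing in the proposal explains where those $s$ steps are absorbed. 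With the definition of $\ddd_k$ as printed they cannot be absorbed: the polynomial of Section~\ref{sec-casen5} is homogeneous of degree $(1,1)$ with support in $[0,1]\times[0,1]$, and the paper computes $\DD(p)=3$ for it, while $\ddd_2(1,1)=\ddd_2(2,0)=\ddd_1(2)=2$. So either the recursion defining $\ddd_k$ is missing an additive count of the macro-step (something of the form $\ddd_k(\ldots,m_k)=\ddd_{k-1}(m_1,\ldots,m_{k-1})+1+\ddd_k(\ldots,M_{k-1},m_k-1)$) or the statement itself must be corrected; in either case your induction cannot be completed as written.

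A further difficulty sits inside your auxiliary lemma. The formulas (\ref{fp-der}) introduce the factor $\frac{x_1\cdots x_{j-1}}{y_1\cdots y_{j-1}}$, which shifts exponents in \emph{all} coordinates $1,\ldots,j-1$, not only the $(n-1)$-st, and the division by $\reg(\partial p)$ redistributes them further; you give no reason why the growth should be confined to coordinate $n-1$. More structurally, condition (\ref{supp-p}) ties the support box to the degree: if after the macro-step $p'$ has $\pdeg_{n-1}(p')=d$ but its $x_{n-1}$-exponents reach up to $M_{n-1}>d$, then its $y_{n-1}$-exponents are negative and $p'$ is not a polynomial, so the hypothesis of the Proposition cannot literally be re-applied to $p'$ with the parameters you propose. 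Indeed, already one step of the algorithm in the two-pair case produces $y_1^{-1}$ terms. To run an induction of this shape you must first strengthen the statement to Laurent polynomials whose $x_i$-exponents lie in $[0,M_i]$ for prescribed $M_i\ge m_i$ (so the $y_i$-exponents lie in $[m_i-M_i,m_i]$), prove the bound in that generality, and only then specialize; the Proposition as stated is not a usable induction hypothesis.
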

Suppose now that $p$ has the special form given by (\ref{formofp}). In this situation, the support of $p$ is quite special and we conjecture, based on an algorithmic implementation, that the number $\DD(n)$ is given by
\begin{Conjecture}
$\DD(n) = A(n-1,1)$.
\end{Conjecture}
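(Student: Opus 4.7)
The plan is to identify a two-parameter recursion hidden in the derivation-division algorithm applied to the specific polynomial $p$ of (\ref{formofp}) and show it matches the Péter-Ackermann recurrence. I would first introduce an auxiliary family of Laurent polynomials $p_{n,k}\in\Lau_n^\reg$ patterned after the nested form (\ref{nested-expansion-p}), with $k$ parameterizing the innermost exponent while preserving the ``triangular'' shape of (\ref{formofp}). The target polynomial $p$ corresponds to a specific base configuration of this family, so that writing $\ddd^\ast(n,k) := \DD(p_{n,k})$ reduces the conjecture to computing one value of $\ddd^\ast$.

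The heart of the argument is to prove a double recurrence of the form
\begin{equation*}
\ddd^\ast(n,k+1) = \ddd^\ast\bigl(n-1,\,\ddd^\ast(n,k)\bigr),\qquad \ddd^\ast(n,0) = \ddd^\ast(n-1,1),
\end{equation*}
with a base case $\ddd^\ast(0,k)$ computed directly from Example \ref{example-n=1}, matching (up to indexing normalisation) the Péter-Ackermann recursion. To derive it I would analyse one round of the algorithm on $p_{n,k}$ in its nested form: the derivation $\partial$ peels off one copy of $y_n$, regularisation strips the appropriate power of $x_n$, and the residue has the same nested shape but with the innermost exponent decremented by one; however, the new outer coefficients are Laurent polynomials in the lower-indexed variables that must themselves be fully reduced by a sub-algorithm of depth $n-1$ before another outer step can proceed. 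The interleaving of these sub-invocations is precisely the double recursion defining $A$.

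The upper bound $\DD(n)\le A(n-1,1)$ would then follow by double induction on $(n,k)$, combining Proposition~\ref{prop-derivation-keeps-pdeg}, the reverse-lex descent of the main theorem, and the explicit support of (\ref{formofp}). For the matching lower bound $\DD(n)\ge A(n-1,1)$ I would exhibit, at every step $j$ of the algorithm, a distinguished monomial $\mon_j\in\supp(p^{(j)})$ whose coefficient in $R$ is provably nonzero, thereby ruling out earlier termination. The main obstacle is this non-cancellation claim: after many iterations the coefficients are elaborate polynomial combinations of the $\Delta_i=R_1\cdots R_i$, and one must exclude accidental vanishing. My approach is to choose a monomial order on $R=\cZ[R_1,\ldots,R_n]$ adapted to the algorithm and show by induction that each derivation-division step strictly advances the leading term of the coefficient of $\mon_j$; the explicit Leibniz computation built into (\ref{fp-der}) then guarantees that no cancellation can occur at the leading order, and an alternative safety net is to specialise $R_1,\ldots,R_n$ to algebraically independent transcendentals and verify non-vanishing numerically at each stage.

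Finally, I would calibrate the obtained closed-form against the explicit values $\DD(1), \DD(2), \DD(3)$ computed in the paper, which pins down the exact normalisation of the Ackermann recursion intended by the conjecture, and I would use the same algorithmic implementation cited in the statement to double-check at least the first nontrivial new case $n=4$.
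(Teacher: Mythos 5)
The statement you are trying to prove is presented in the paper as a \emph{Conjecture}, supported only by an algorithmic implementation; the paper contains no proof, so your text must stand entirely on its own. As written, it is a research plan rather than a proof: every load-bearing step is phrased as ``I would'' and the two essential difficulties are both deferred. First, you never establish that the quantity $\ddd^\ast(n,k)=\DD(p_{n,k})$ satisfies \emph{exactly} the P\'eter--Ackermann recurrence. The paper's own Section~\ref{sect-numberofsteps} only offers (with proof omitted) an \emph{upper bound} $\DD(p)\le \ddd_n(m_1,\ldots,m_n)$ via a recursion that is not literally the Ackermann recursion, and upgrading an inequality to an equality is precisely the content of the conjecture. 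Your description of one round of the algorithm --- ``$\partial$ peels off one copy of $y_n$, regularisation strips a power of $x_n$, the residue has the same nested shape with the innermost exponent decremented'' --- is plausible but unverified; in particular you do not show that the family $p_{n,k}$ is closed under a derivation-division step, nor that the number of interleaved sub-invocations is exactly $\ddd^\ast(n-1,\cdot)$ evaluated at $\ddd^\ast(n,k)$ rather than at some smaller argument.

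Second, and more seriously, the lower bound rests on a non-cancellation claim that you explicitly identify as the main obstacle and then do not resolve. The assertion that a suitable monomial order on $R=\cZ[R_1,\ldots,R_n]$ makes the leading term of the distinguished coefficient ``strictly advance'' at each step is exactly where the difficulty lives: the coefficients produced by (\ref{fp-der}) are combinations such as $j\alpha_j+(m-j-1)\alpha_{j-1}$ built from the differences $\Delta_j-\Delta_{j-1}$, and whether their leading terms can conspire to vanish after many iterations is the open question, not a routine induction. Your proposed safety nets cannot close this gap: specialising the $R_i$ to transcendentals and ``verifying non-vanishing numerically at each stage'' is not a finite procedure for all $n$ (already $\DD(5)$ is conjectured to be $65533$ and $\DD(6)$ a power tower), and checking $n=4$ by computer certifies one instance, not the theorem. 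Until the exact recurrence and the non-cancellation lemma are actually proved, the statement remains a conjecture.
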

In the enunciate, $A(n,k)$ denotes the {\em Ackerman function},  defined inductively as follows:
$$
A(n, k) =
 \begin{cases}
 k+1 & \mbox{if } n = 0 \\
 A(n-1, 1) & \mbox{if } n > 0 \mbox{ and } k = 0 \\
 A(n-1, A(n, k-1)) & \mbox{if } n > 0 \mbox{ and } k > 0.
 \end{cases}
 $$
Notice that such function grows extremely fast with respect to $n$. Here are the first 5 values:
$$
A(0,1) = 2, A(1,1) = 3, A(2,1) = 5, A(3,1) = 13, A(4,1) = 65533, A(5,1) = {2^2}^{{\cdot}^{{\cdot}^{{\cdot}^2}}} - 3
$$
Here, the rightmost expression is a tower of powers of height $A(4,1)\mbox{ + 3}$. 

We conclude by providing a summary of the various upper bounds and the exact values known for $\fp(n)$:
\begin{center} 
\begin{tabular}{|c|cccc|}
\hline
$n$       &    $\DD(n)$  &  $\mathrm{K}(n)$              &  $\mathrm{SB}(n)$ & Exact Value of $\fp(n)$ \\
\hline
1          &         2           &             8                           &          3                       & 2         \\
2          &         3           &             5184                     &         21                      & 3         \\
3          &         5           & $\sim 1.3\times 10^8$      &         562                   & 5         \\
4         &         13          & $\sim 1.0\times 10^{14}$  &         42554               & ?                 \\
5         &         65533    & $\sim 2.0\times 10^{21}$  &      $\sim 8.3 \times 10^{6}$  &?         \\
\hline
\end{tabular}
\end{center}
For $n\ge 6$, the $\DD(n)$ bound becomes much larger than both Khovanskii's bound $\mathrm{K}(n)$ and Bihan Sotile's bound $\mathrm{SB}(n)$. 

\vspace{0.5cm}
\noindent Up to the author's knowledge, the exact value of $\fp(n)$ is unknown for $n\ge 4$.
\section*{Declaration}
The author states that there is no conflict of interest.

\Address

\end{document}